\documentclass{article}
\usepackage{amssymb,amsfonts,amsmath,amsthm}
\vfuzz2pt
% THEOREM Environments --------------------------------------------------------
\newtheorem{thm}{Theorem}%[section]
\newtheorem{cor}[thm]{Corollary}
\newtheorem{lem}[thm]{Lemma}

\theoremstyle{definition}

\theoremstyle{remark}

%\numberwithin{equation}{section}
% MATH ------------------------------------------------------------------------

\begin{document}

\title{\centering{Boundary Curves of Free Boundary Minimal Surfaces}}
\author{ Zuhuan Yu \thanks{School of Mathematical Science, Capital Normal University, Beijing 100048, China; yuzh@cnu.edu.cn}}
%\date{}

%\address{School of Mathematical Science, Capital Normal University, Beijing 100048, China}

%\thanks{This work was partially supported by NSFC (Natural Science Foundation of China) No. 11771070.}

%\subjclass{Primary 49Q05 53A10}

%\keywords{minimal surface, free boundary, annulus}

%\date{August 20, 2019.}
\maketitle

\begin{abstract}
  In this paper we investigate free boundary minimal surfaces in the unit ball in Euclidean 3-space, and by using holomorphic techniques we prove that intersection curves of free boundary minimal surfaces with the unit sphere are all circles.
\end{abstract}

% -----------------------------------------------------------------------------

% -----------------------------------------------------------------------------

\section{Introduction}

The free boundary minimal surfaces come from the studying of partitioning of convex bodies and it has been studied for a long time. A classical result due to J. C. C. Nitsche [1] is that the minimal disc contained in the unit ball ${\mathbb{B}}^3$ in euclidean space and meeting the boundary $\partial {\mathbb{B}}^3$ orthogonally must be a flat equator, and it has an interesting generalization to the higher codimensions, which was obtained by Fraser and Schoen [2].

In recent years many free boundary minimal surfaces have been constructed out. Fraser and Scheon [3] constructed embedded surfaces of genus zero with any number of boundary components by finding the metrics on these surfaces that maximize the first Steklov eigenvalue with fixed boundary length. Pacard, Folha and Zolotareva [4] found examples of genus zero or one, and with any number of boundary components greater than a large constant. Martin Li and Kapouleas [5] constructed free boundary minimal surfaces with three boundary components and arbitrarily large genus.

There are lots of results of classification of free boundary minimal surfaces in unit ball ${\mathbb{B}}^3$ in euclidean 3-space. Ambrozio and Nunes [6] get a gap theorem for free boundary minimal surfaces which single out the flat disc and critical catenoid. In their paper [7] Fraser and Scheon prove that flat disc is the only free boundary minimal surface with Morse index equals to one and critical catenoid is the only free boundary minimal surface immersed by its first Steklov  eigenfunctions. Smith, Zhou [8], Tran [9] and Devyver [10] state that the Morse index of the critical catenoid equals to four.

In preprint [11], we prove that the free boundary minimal annulus is the critical catenoid, and the methods used there is extended in this paper. We prove that

\begin{thm}{}
 The boundary curves of the free boundary minimal immersed surfaces in the unit ball ${\mathbb{B}}^3$ in euclidean 3-space are all circles.
\end{thm}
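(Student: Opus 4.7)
The plan is to use holomorphic techniques centered on the Hopf differential, generalizing the method of the author's preprint [11] for the minimal annulus. Denote the surface by $\Sigma$ and choose local isothermal coordinates $z = u + iv$ near a boundary arc, realized as $\{v = 0\}$ with the interior at $v > 0$. Then $X : \Sigma \to \mathbb{R}^3$ satisfies $X_{z\bar z} = 0$ (minimality) and $X_z \cdot X_z = 0$ (conformality), and the Weierstrass vector $X_z = (\phi_1, \phi_2, \phi_3)$ is holomorphic. With $\lambda^2 := 2|X_z|^2$ the conformal factor, the free boundary condition translates on $\{v = 0\}$ to $|X|^2 = 1$ together with $X_v = -\lambda X$ (the conormal equals the position vector), so in particular $\partial_\nu X_j = X_j$ on $\partial\Sigma$.

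The central holomorphic object is the Hopf differential $\Phi\,dz^2 := \langle X_{zz}, N\rangle\,dz^2$, holomorphic on $\Sigma$ because $\Sigma$ is minimal. Differentiating $X_v = -\lambda X$ tangentially along $\partial\Sigma$ and using that $N$ is tangent to $\partial\mathbb{B}^3$ (so $\langle X,N\rangle = 0$) and perpendicular to $X_u$ yields $\langle X_{uv}, N\rangle \equiv 0$ on $\partial\Sigma$, so that $\mathrm{Im}\,\Phi \equiv 0$ on $\partial\Sigma$. This is a Schwarz-type reflection principle at the free boundary, and $\Phi$ extends holomorphically across each boundary arc to the Schottky double $\hat\Sigma$ of $\Sigma$. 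A short Frenet computation for the boundary curve $\gamma \subset \partial\mathbb{B}^3$, using $\ddot\gamma = -X + \kappa_g\,N_\gamma$ with $N_\gamma = \pm N$, gives
\[
\kappa_g(\gamma) \;=\; \pm\,\frac{2\,\mathrm{Re}\,\Phi}{\lambda^2}\qquad\text{on }\partial\Sigma,
\]
where $\kappa_g$ is the geodesic curvature of $\gamma$ as a curve in $\partial\mathbb{B}^3$. Since a curve in $\partial\mathbb{B}^3$ is a circle if and only if its geodesic curvature is constant, the theorem reduces to showing that $\Phi/\lambda^2$ is constant along each connected component of $\partial\Sigma$.

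The constancy of $\Phi/\lambda^2$ is the heart of the proof and the main obstacle. For the minimal annulus treated in [11], $\hat\Sigma$ is a torus, the space of holomorphic quadratic differentials is one-dimensional, and so $\Phi$ is forced to be a constant multiple of $(dz)^2$; the required constancy then follows from additional boundary identities. In arbitrary topology a dimension count on $\hat\Sigma$ is insufficient, since the genus of the double and hence the space of holomorphic quadratic differentials grows. The plan is to build, from $X$ together with its locally defined conjugate harmonic vector $Y$ (so that $F := X + iY$ is holomorphic and $F'(u) = \lambda(T - iX)$ on $\partial\Sigma$), an auxiliary holomorphic or meromorphic object on $\Sigma$ whose boundary data encode both $\Phi$ and $\lambda^2$; combining this object with the reflected $\Phi$ on $\hat\Sigma$ and the scalar identities $|X|^2 = 1$ and $X \cdot X_u = 0$ on $\partial\Sigma$, one forces $\Phi/\lambda^2$ to be locally constant along each boundary component, and the identity theorem on $\hat\Sigma$ then propagates this to the entire component. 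The explicit construction of this auxiliary object, in a way that remains valid for arbitrary topology of $\Sigma$, is the principal technical challenge I expect.
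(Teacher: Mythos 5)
Your reduction is sound as far as it goes: the Hopf differential $\Phi\,dz^2=\langle X_{zz},N\rangle\,dz^2$ is holomorphic, the free boundary condition does force $\operatorname{Im}\Phi=0$ along $\partial\Sigma$ (equivalently, the boundary curves are lines of curvature, which is the paper's Lemma 2), and the identity $\kappa_g=\pm 2\operatorname{Re}\Phi/\lambda^2$ correctly translates the theorem into the statement that $\Phi/\lambda^2$ is constant on each boundary component. But the proposal stops exactly at the point where the theorem actually lives. The ``auxiliary holomorphic or meromorphic object'' whose boundary data would encode both $\Phi$ and $\lambda^2$ is never constructed, and you yourself identify its construction as the principal open difficulty; without it nothing forces $\kappa_g$ to be constant, and the dimension count that worked for the annulus in [11] is, as you note, unavailable in higher topology. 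A secondary issue: even if local constancy of $\Phi/\lambda^2$ on a boundary arc were established, the identity theorem on the Schottky double does not apply to $\Phi/\lambda^2$ itself, since $\lambda^2$ is only real-analytic, not holomorphic; one would have to propagate along the boundary by real-analyticity instead. So what you have is a correct reformulation plus a program, not a proof.

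For comparison, the paper's argument is purely local and avoids global function theory on the double entirely. After showing the boundary curves are principal lines and that umbilic points on the boundary are discrete (unless the surface is the equatorial disk), it takes a principal coordinate patch at a non-umbilic boundary point $P$, normalizes the Weierstrass data to $\omega=\tfrac12 e^{i\theta_0}g_w^{-1}dw$, expands $g=R+a_1w+a_2w^2+\cdots$, and feeds the two boundary identities ($|U|^2=1$ on $\partial\mathbb{B}^3$ and $U\parallel U_u$, the orthogonality condition) into the power series order by order. The resulting relations force $a_1,a_2\in\mathbb{R}$ and $\Psi=\bar\Psi$, which makes $U_v$, $U_{vv}$, $U_{vvv}$ linearly dependent at $P$, i.e., the torsion vanishes there. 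Density of non-umbilic points then shows each boundary curve is planar, and a planar curve on the sphere is a circle. If you want to salvage your route, the order-by-order use of the two scalar boundary identities is precisely the mechanism you are missing: it replaces the global holomorphic object by finitely many algebraic constraints on the jet of the Gauss map at each boundary point.
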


\section{The boundary curves are principle lines}

In this section we study the boundary curves of the free boundary minimal surfaces. Let $\Sigma$ be a compact Riemannian surface with boundaries. Assume $$U: \Sigma \rightarrow {\mathbb{B}}^3\subset \mathbb{R}^3$$ is a free boundary minimal surface in the unit ball ${\mathbb{B}}^3$ in Euclidean 3-space $\mathbb{R}^3$, denoted it by $\Sigma$ for brevity.  We prove the following property of the boundary

\begin{lem}
 The connected components of boundary curves of the free boundary minimal surface are all principle lines.
 \end{lem}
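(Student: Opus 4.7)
The plan is to exploit the free boundary condition directly. Along a boundary component $\gamma \subset \partial\Sigma$, the fact that $\Sigma$ meets $\partial\mathbb{B}^3$ orthogonally means the outward conormal $\nu$ to $\gamma$ in $\Sigma$ coincides with the outward unit normal to the sphere at that point, which is just the position vector $U$. This single identity should force $\gamma$ to be a line of curvature.

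First I would fix notation: let $N$ be a unit normal to $\Sigma$ in $\mathbb{R}^3$, let $T$ be the unit tangent to $\gamma$ parametrized by arclength, and let $\nu$ be the outward conormal, so that $\{T,\nu,N\}$ is an orthonormal frame along $\gamma$. The free boundary hypothesis then reads $\nu = U$ on $\gamma$. Since $U$ is the position vector in $\mathbb{R}^3$ and $\gamma$ is arclength parametrized, the Euclidean directional derivative satisfies $D_T U = T$, and therefore $D_T \nu = T$.

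The crux is to decompose $D_T \nu$ relative to the normal $N$ of $\Sigma$. The $N$-component is $\langle D_T \nu, N\rangle = -\langle \nu, D_T N\rangle = -A(T,\nu)$, where $A$ is the second fundamental form of $\Sigma$. But $D_T \nu = T$ is purely tangential to $\Sigma$, so its normal component vanishes, forcing $A(T,\nu) = 0$ on $\gamma$. Vanishing of the off-diagonal entry of $A$ in the orthonormal frame $\{T,\nu\}$ is exactly the statement that $T$ and $\nu$ are principal directions, so $\gamma$ is a principal line.

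I do not foresee any real obstacle: the argument rests on the orthogonality between $\Sigma$ and $\partial\mathbb{B}^3$ together with the tautology that the outward normal to $S^2$ at a point is the position vector itself. The only care needed is with sign conventions identifying $\langle D_T \nu, N\rangle$ with $-A(T,\nu)$, but once the Weingarten map is fixed the computation is immediate, and minimality of $\Sigma$ is not even used at this stage.
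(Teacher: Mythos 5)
Your argument is correct, and it reaches the same conclusion from the same essential input (the free boundary condition identifies the outward conormal $\nu$ with the position vector $U$, and everything follows from one differentiation along the boundary), but the computation you run is genuinely different from the paper's. The paper differentiates the \emph{surface normal} $n(s)=U(s)\times U'(s)$, decomposes $U''=aU+bn$, finds $n'=-bU'$, and invokes the Rodrigues criterion (normal derivative parallel to the tangent) to conclude the curve is a line of curvature. You instead differentiate the \emph{conormal}: from $D_T U=T$ being tangent to $\Sigma$ you read off that the normal component $\langle D_T\nu,N\rangle=\pm A(T,\nu)$ vanishes, i.e.\ the off-diagonal entry of the second fundamental form in the frame $\{T,\nu\}$ is zero, which is the other standard characterization of a principal direction. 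Your version buys two things: it avoids the cross product entirely, so it is not tied to surfaces in $\mathbb{R}^3$ and generalizes verbatim to free boundary hypersurfaces meeting a sphere (or any totally umbilic hypersurface) orthogonally in any dimension --- this is essentially Joachimsthal's theorem; and it makes transparent that minimality is irrelevant here, a point the paper's proof also implicitly contains but does not remark on. The paper's version is more elementary and self-contained for a reader who only knows the Rodrigues equation. The only care needed in your write-up is the sign convention relating $\langle D_T N,\cdot\rangle$ to $A$, which you already flag and which does not affect the conclusion since the quantity vanishes either way.
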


\begin{proof}
Take a local arc parameter representation of the boundary curves
$$U:(-\delta, \delta)\rightarrow \mathbb{R}^3;\ \  s\rightarrow U(s).$$

 By the orthogonal assumption, we know that
$U(s)$ and tangent vector $U^\prime(s)$ consist an unit frame in the tangent space $T_{U(s)}\Sigma$. Now let $n(s)$ be the unit normal vector field of the minimal surface $\Sigma$, and $$n(s)=U(s)\times U^\prime(s).$$

Take the derivative of it, we have
$n^\prime(s)=U(s)\times U^{\prime\prime}(s)$. On the other hand $U^{\prime\prime}(s)=a(s)U(s)+b(s)n(s)$, as $|U^{\prime}(s)|=1$. Then
$$n^\prime(s)=U(s)\times U^{\prime\prime}(s)=b(s)U(s)\times n(s)=-b(s) U^\prime(s),$$
which imply $U^\prime(s)$ is a principle direction, namely, the boundary curve $U:(-\delta, \delta)\rightarrow \mathbb{R}^3$ is a principle line on the minimal surface. The Lemma is proved.
\end {proof}

\section{The discrete umbilic points of the minimal surface}
 Let us consider umbilic points on the boundary of the free boundary minimal surface. Besides the plane, the minimal surface has discrete umbilic points on it.

 In fact, if the set of umbilic points have limit points on the minimal surface, then by the holomorphic method, we can prove the Gauss map will be a constant map, so the minimal surface should be flat.

 Apply this result we have

  \begin{lem}
  The umbilic points are discrete on the boundary curves of the free boundary minimal surface except the equator disk.
    \end{lem}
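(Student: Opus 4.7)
The plan is to translate umbilicity into the vanishing of a holomorphic quadratic differential, extend that differential across the boundary using the free boundary condition, and then invoke the identity theorem.

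First, I would fix an isothermal parameter $z$ on a neighborhood of a boundary arc of $\Sigma$. For a minimal immersion $U:\Sigma\to\mathbb{R}^3$, the Hopf differential $\Phi(z)\,dz^2=\langle U_{zz},n\rangle\,dz^2$ is a holomorphic quadratic differential whose zeros coincide exactly with the umbilic points (on a minimal surface the two principal curvatures sum to zero, so they agree at a point only when both vanish, i.e.\ when the full second fundamental form, and hence $\Phi$, vanishes). The lemma therefore reduces to the following: if the zeros of $\Phi$ accumulate at some point $p\in\partial\Sigma$, then $\Phi\equiv 0$. In that case $\Sigma$ is totally umbilic, hence planar, and Nitsche's theorem [1] identifies $\Sigma$ with the equatorial disk, contradicting the hypothesis.

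To convert an accumulation on $\partial\Sigma$ into an interior accumulation (where the identity theorem is applicable), I would apply a Schwarz-type reflection using the free boundary condition. Since $\Sigma$ meets $\partial\mathbb{B}^3$ orthogonally, the inversion $I(x)=x/|x|^2$ in the unit sphere sends $\Sigma$ to a minimal surface $I(\Sigma)$ whose tangent plane matches that of $\Sigma$ along every boundary arc. Thus $\widetilde{\Sigma}:=\Sigma\cup I(\Sigma)$ is a real-analytic minimal surface in which the original boundary curves are interior, and $\Phi$ extends to a holomorphic function $\widetilde{\Phi}$ on $\widetilde{\Sigma}$. An accumulation of zeros on $\partial\Sigma$ then becomes an accumulation at an interior point of $\widetilde{\Sigma}$, so the identity theorem gives $\widetilde{\Phi}\equiv0$, hence $\Phi\equiv0$, closing the argument.

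The main obstacle is the reflection step: verifying that $I(\Sigma)$ joins $\Sigma$ smoothly (in fact real-analytically) across $\partial\Sigma$ and that the extension is still a minimal surface, so that the Hopf differential genuinely extends as a holomorphic object. This is precisely where the strict orthogonality of the free boundary condition, rather than only the weaker principal-line property of Lemma~2.1, is essential; without orthogonality the tangent planes of $\Sigma$ and $I(\Sigma)$ would not agree along the seam and no holomorphic extension would be available. Once this reflection is set up, the remainder is a standard application of the identity theorem for holomorphic functions.
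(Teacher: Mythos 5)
Your overall strategy---umbilic points are the zeros of the holomorphic Hopf differential, so an accumulation of them forces the differential to vanish identically, whence the surface is totally umbilic, planar, and therefore the equatorial disk---is the right one, and it is essentially what the paper gestures at with its one-line remark about ``the holomorphic method.'' But the step you yourself identify as the crux, reflecting the surface through the sphere, is where the argument breaks. Inversion $I(x)=x/|x|^2$ is a conformal map of $\mathbb{R}^3\setminus\{0\}$ and does \emph{not} preserve minimality: writing the pulled-back metric as $e^{2u}\delta$ with $u=-2\ln|x|$, the mean curvature transforms as $\widetilde{H}=|x|^2H-2\langle x,n\rangle$, so for $H=0$ the image $I(\Sigma)$ has mean curvature $-2\langle x,n\rangle$, which vanishes only when $\Sigma$ is a cone through the origin. (The critical catenoid, for instance, does not invert to a minimal surface.) The union $\Sigma\cup I(\Sigma)$ is indeed $C^1$ along the seam because of the orthogonality, but it is not minimal on the reflected side, so the Hopf differential acquires no holomorphic extension from this construction and the identity theorem cannot be invoked as you propose.

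The repair is to reflect the holomorphic \emph{function} rather than the surface. By Lemma 2 of the paper the free boundary is a line of curvature; in conformal coordinates $w=u+iv$ in which the boundary arc is $\{u=0\}$ this says the off-diagonal coefficient $M$ of the second fundamental form vanishes along the boundary, so $\Phi=\langle U_{ww},n\rangle=\tfrac14\left(L-N-2iM\right)$ is real there. The classical Schwarz reflection principle for holomorphic functions then extends $\Phi$ across $\{u=0\}$ by setting $\Phi(-\bar w):=\overline{\Phi(w)}$, and a sequence of boundary zeros accumulating at $P$ forces $\Phi\equiv 0$ near $P$, hence everywhere on the connected surface. Note that it is precisely the principal-line property of the boundary, not the ``strict orthogonality'' as you assert, that makes the extension possible---your remark on this point has the logic reversed. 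For what it is worth, the paper itself never supplies this boundary step at all: its stated justification only covers accumulation at interior points, so the gap you set out to fill is real, but the mechanism you chose does not close it.
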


  In the following we only consider the free boundary minimal surfaces which have discrete umbilic points on it.

 \section{The principle patch on the minimal surface}

Let $U:\Sigma \rightarrow \mathbb{R}^3$ be a minimal surface. At any non-umbilic point $P$, there exists a neighborhood which has no umbilic points. We take a principle patch around the point $P$, denoted by
$$ \Omega^{\natural}=\{(x,y)\ |\ -\tilde \delta<x<\tilde \delta, \    -\tilde \delta<y<\tilde \delta.\}$$

On it, the first and second fundamental form of the minimal surface can be written as
$$I=ds^2=Edx^2+Gdy^2,\ \ II=-\kappa Edx^2+\kappa Gdy^2,$$
where $-\kappa, \ \kappa$ are principle curvature with respect to the directions of $x-$lines and $y-$lines.

Note that $\kappa>0$ or $\kappa<0$, corresponding to the choice of the normal direction of the minimal surface.

Without loss of generality, assume that $\kappa>0$. Recall the Codazzi equations
$$\frac{\partial (-\kappa E)}{\partial y}=\frac{(\kappa-\kappa)}{2}\frac{\partial E}{\partial y},\ \
\frac{\partial (\kappa G)}{\partial x}=\frac{(\kappa-\kappa)}{2}\frac{\partial G}{\partial x}.$$
Therefore,
$$\kappa E=\mu(x)^2>0,\ \ \ \kappa G=\nu(y)^2>0.$$
Set $$ u=\int_0^x \mu(x)dx,\ \ v=\int_0^y \nu(y)dy,\ \ \lambda=\frac{1}{\kappa},$$
we have $$\det \frac{\partial (u, v)}{(x,y)}=\mu\nu>0.$$ Choose $(u, v)$ as new parameters we get a new principle patch $\Omega $ (for some $\delta>0$)
\begin{equation}
\Omega =\{(u,v)\ |\ -\delta< u <\delta, \    -\delta< v <\delta \},
\end{equation}
and
\begin{equation}
I=\lambda(du^2+dv^2),\ \ \ II=-du^2+dv^2.
\end{equation}
In summary we have
\begin{lem}
Let $P$ be an non-umbilic point on the minimal surface $U:\Sigma \rightarrow \mathbb{R}^3$, then there exist a principle patch $\Omega $ such that $(u,v)(P)=(0,0)$ and the fundamental forms can be given as in (2).
\end{lem}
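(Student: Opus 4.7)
The plan is to build the patch in two stages: first produce honest principle coordinates on a neighborhood of $P$, then rescale the two coordinate axes separately, using the Codazzi--Mainardi equations, to bring the fundamental forms into the asserted normal form.

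For the first stage I would use that $P$ is non-umbilic: the principle curvatures are distinct at $P$, so by continuity they remain distinct on a neighborhood, the two principle line fields vary smoothly there, and the standard existence theorem for principle coordinates away from umbilics furnishes local coordinates $(x,y)$ centered at $P$ in which $I = E\,dx^2 + G\,dy^2$ and $II = L\,dx^2 + N\,dy^2$ with $F = M = 0$. The minimality condition $H=0$ together with the orthogonality of the principle directions forces the principle curvatures to be $\pm \kappa$; fixing the unit normal so that $\kappa>0$, I would write $L = -\kappa E$ and $N = \kappa G$.

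For the second stage I would feed this into Codazzi--Mainardi. In principle coordinates those equations reduce to $L_y = \tfrac{1}{2} E_y(k_1 + k_2)$ and $N_x = \tfrac{1}{2} G_x(k_1 + k_2)$, which collapse to $\partial_y(\kappa E) = 0$ and $\partial_x(\kappa G) = 0$ once $H=0$ is imposed. Hence $\kappa E = \mu(x)^2$ and $\kappa G = \nu(y)^2$ for smooth positive functions $\mu, \nu$ of a single variable. Setting $u(x) = \int_0^x \mu(t)\,dt$ and $v(y) = \int_0^y \nu(t)\,dt$ yields a diffeomorphism onto a small box $\Omega$ of the stated form (the Jacobian $\mu\nu$ is positive), and a direct substitution, with $\lambda := 1/\kappa$, produces $I = \lambda(du^2 + dv^2)$ and $II = -du^2 + dv^2$.

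I do not foresee a genuine obstacle here: the construction is standard, and the only things requiring care are the sign convention that makes $\kappa>0$ and the verification that the Codazzi equations really yield functions of a single variable whose square roots are smooth and positive, both of which are bookkeeping.
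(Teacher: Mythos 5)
Your proposal is correct and follows essentially the same route as the paper: principle coordinates at the non-umbilic point, the reduction of Codazzi--Mainardi under $H=0$ to $\partial_y(\kappa E)=0$ and $\partial_x(\kappa G)=0$, and the separate rescalings $u=\int_0^x\mu$, $v=\int_0^y\nu$ with $\lambda=1/\kappa$. The only difference is that you spell out the existence of the initial principle patch and the sign bookkeeping slightly more explicitly than the paper does.
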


\section{The Gauss map of the minimal surface}

Now let $U:\Sigma \rightarrow \mathbb{R}^3$ be a minimal surface, and any non-umbilic point $P$ on it. By the Lemma 4 in last section, there exist a principle patch $\Omega$. While we introduce the complex coordinates $w=u+vi$, then $$\Omega=\{w\in \mathbb{C}\ | \ -\delta<\text{Re}\ w<\delta\,\ -\delta<\text{Im}\ w<\delta\},$$ and the fundamental forms are
$$I=\lambda dw d{\bar w}, \ \ II=-\frac{1}{2}(dw^2+d{\bar w}^2).$$

Firstly, from the fundamental forms we know that the principle curvatures and Gauss curvature are
$$\pm \kappa=\pm \frac{1}{\lambda},\ \ \ K=-\frac{1}{{\lambda}^2}.$$

Secondly,  we have $$K=\frac{1}{\lambda}\Delta (\ln \frac{1}{\sqrt{\lambda}}),$$
where
$$\Delta=\frac{\partial^2}{\partial u^2}+\frac{\partial^2}{\partial v^2}\ \\ =4\frac{\partial^2}{\partial w\partial {\bar w}}.$$
Denote $\phi=\ln \frac{1}{\sqrt \lambda}$, so the Gauss equation is written as
\begin{equation}
\Delta \phi+e^{2\phi}=0.
\end{equation}

The minimal surface has the classical Weierstrass representation, which is a powerful holomorphic tools. The holomorphic data $(g,\omega)$ satisfy
the following conditions

\begin{enumerate}
  \item  $g(w)$ is a meromorphic function, $\omega=f(w)dw $ is a holomorphic 1-form,
  \item  a pole point of order l of $g$ is exactly a zero point of order 2l of $f$,
  \end{enumerate}
The (local) minimal immersion $U: \Omega \rightarrow \mathbb{R}^3$ can be written as bellow
\begin{equation}
U(w)=\text{Re} \int \{(1-g^2)fdw, i(1+g^2)fdw, 2gfdw\}£¬
\end{equation}
and the metric $I=|f(w)|^2(1+|g(w)|^2)^2dwd{\bar w}$, then $\lambda=|f(w)|^2(1+|g(w)|^2)^2, $ hence $\phi=-\frac{1}{2}\ln |f|^2-\frac{1}{2}\ln (1+|g|^2)^2.$ By a direct computation, one shows
\begin{align*}
\Delta \phi =-4\frac{|g_w|^2}{(1+|g|^2)^2},\ \
e^{2\phi} =\frac{1}{\lambda}=\frac{1}{|f|^2(1+|g|^2)^2}.
\end{align*}
Using the Gauss equation (3) we get $2|fg_w|=1,$ thus $$f(w)=\frac{1}{2}\frac{e^{i\theta_0}}{g_w},$$
here $\theta_0$ is a constant real number, and by the condition 2, $g$ only has pole and zero points of order 1. In summary we have
\begin{thm}
The minimal surface is locally determined by its Gauss map and the Weierstrass data can be written as
 \begin{equation}
 (g,\omega)=(g, \frac{1}{2}\frac{e^{i\theta_0}}{g_w}dw).
 \end{equation}
 \end{thm}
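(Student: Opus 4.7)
The plan is to combine the classical Weierstrass parametrization with the Liouville--type Gauss equation (3) that holds in the principal patch $\Omega$. Schematically, the Weierstrass ansatz offers a two--function freedom $(g,f)$, and (3) will cut this down to one holomorphic function together with a single real constant.

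First, I would invoke the Weierstrass representation as already stated in the excerpt: on a simply connected neighborhood the minimal immersion can be written in the form (4) with $g$ meromorphic and $\omega = f(w)\,dw$ holomorphic, and the induced metric is $|f|^2(1+|g|^2)^2 dw\,d\bar w$. Thus $\lambda = |f|^2(1+|g|^2)^2$ and
\[
\phi \;=\; \ln\frac{1}{\sqrt{\lambda}} \;=\; -\frac{1}{2}\ln|f|^2 \;-\; \ln(1+|g|^2).
\]
Second, I would substitute this $\phi$ into the Gauss equation $\Delta\phi+e^{2\phi}=0$ via $\Delta=4\,\partial_w\partial_{\bar w}$. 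Since $f$ is holomorphic and nowhere zero on a non--umbilic patch, $\ln|f|^2$ is harmonic, so only the $\ln(1+|g|^2)$ piece contributes to $\Delta\phi$. A short calculation gives
\[
\Delta\phi \;=\; -\,\frac{4|g_w|^2}{(1+|g|^2)^2}, \qquad e^{2\phi} \;=\; \frac{1}{|f|^2(1+|g|^2)^2},
\]
and the Gauss equation collapses to the pointwise identity $4|fg_w|^2=1$, i.e.\ $|fg_w|\equiv \tfrac{1}{2}$ on $\Omega$.

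Third, I would apply a holomorphic rigidity argument: the product $fg_w$ is holomorphic in $w$, and a nowhere--zero holomorphic function of constant modulus must be constant (open mapping / maximum modulus principle). Hence $fg_w \equiv \tfrac{1}{2}e^{i\theta_0}$ for some real constant $\theta_0$, which yields $f = \tfrac{1}{2}\,e^{i\theta_0}/g_w$, precisely formula (5). Since the entire immersion (4) is then determined by $g$ together with the single phase $\theta_0$, the surface is locally determined by its Gauss map.

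Finally, I would verify compatibility with condition~2 on the Weierstrass data, which also dictates the pole/zero structure of $g$. If $g$ had a pole of order $l$, then $g_w$ would have a pole of order $l+1$, so $f=\tfrac{1}{2}e^{i\theta_0}/g_w$ would have a zero of order $l+1$; matching this with the required zero order $2l$ from condition~2 forces $l=1$. Symmetrically, a zero of $g$ of order $k$ would give $f$ a pole of order $k-1$, and holomorphicity of $f$ forces $k=1$. Thus $g$ has only simple poles and simple zeros, as claimed. The main obstacle, such as it is, is the holomorphic rigidity step: recognizing that the Liouville equation together with holomorphicity of $f$ and $g_w$ forces $|fg_w|$ to be constant and hence $fg_w$ itself to be constant. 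Once that is seen, everything reduces to a direct computation and the order--matching bookkeeping above.
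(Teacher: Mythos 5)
Your proposal is correct and follows essentially the same route as the paper: compute $\phi=\ln(1/\sqrt{\lambda})$ from the Weierstrass metric, substitute into the Liouville--Gauss equation (3) to obtain $2|fg_w|=1$, and conclude $f=\tfrac{1}{2}e^{i\theta_0}/g_w$ with the pole/zero orders pinned down by condition~2. You actually make explicit two steps the paper leaves implicit (the maximum--modulus argument turning constant modulus of $fg_w$ into a constant, and the order--matching bookkeeping), which only strengthens the write-up.
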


 \section{The boundaries of the free boundary minimal surface}

 %{\bf Remark:} Here we have assumed the minimal surface can be extended across the boundary. From section 3 we know the boundary curves are principle lines, then the component of the boundary through $P$ should be one of both lines $u-$line and $v-$line. Now assume it is the $v-$line (meanwhile $u=0$), the corresponding principle curvature is positive. If not, change the direction of the normal vector field.

  In this section by using the Weierstrass representation of the minimal surface and the boundary conditions we prove
  \begin{thm}
  The boundary curves of the free boundary minimal surface are planar circles.
  \end{thm}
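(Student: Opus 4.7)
The plan is to work locally around a non-umbilic boundary point using the principal patch of Lemma 4, reduce the problem to showing the conformal factor $\lambda$ is constant along each boundary component, and then close the argument using the explicit Weierstrass data from Theorem 5.

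\textbf{Setup and reduction.} Fix a non-umbilic boundary point and take the principal patch $(u,v)\in\Omega$ with the boundary of $\Sigma$ on $v=0$. The two free boundary conditions $|U|^2 = 1$ and $\Sigma \perp S^2$ together imply that on $v=0$ the position vector $U$ is tangent to $\Sigma$ and perpendicular to the boundary tangent $U_u$, hence $U_v = -\sqrt{\lambda}\,U$. Introduce the Darboux frame $T = U_u/\sqrt{\lambda}$, $e = -U$, $n = U\times T$ along $\partial\Sigma$; the identity $e_s = -U_s = -T$ in the Darboux equations gives $\kappa_g \equiv 1$ and $\tau_g \equiv 0$ on $\partial\Sigma$. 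A Frenet-Serret calculation then yields Frenet curvature $\sqrt{1+\kappa_n^2}$ with $\kappa_n = -1/\lambda$ and vanishing torsion exactly when $\kappa_n$ is constant, so the boundary curve is a planar circle on $S^2$ precisely when $\lambda|_{v=0}$ is constant along each component.

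\textbf{Boundary data in Weierstrass form.} Differentiating $|U|^2 = 1$ twice along $\partial\Sigma$ and substituting the Gauss--Weingarten formula $U_{uu} = \frac{\lambda_u}{2\lambda}U_u - \frac{\lambda_v}{2\lambda}U_v - n$ produces the boundary identity $\lambda_v|_{v=0} = -2\lambda^{3/2}$. A Bj\"orling-type computation gives $\phi(u,0) = U_u(u,0) + i\sqrt{\lambda(u,0)}\,U(u,0)$ with $\phi = 2U_w$; comparing against $\phi = ((1-g^2)f,\, i(1+g^2)f,\, 2gf)$ and $f = e^{i\theta_0}/(2g_w)$ from Theorem 5 yields explicit expressions for $g|_{v=0}$ and $g_u|_{v=0}$ in terms of $U(u,0)$ and $\sqrt{\lambda(u,0)}$, producing the first-order boundary ODE
\[
\frac{d\log g}{du}\bigg|_{v=0} \;=\; \frac{e^{i\theta_0}}{U_{u,3}(u,0) + i\sqrt{\lambda(u,0)}\, U_3(u,0)}.
\]

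\textbf{Closing via holomorphicity.} Since $g$ is holomorphic in $w = u+iv$, its boundary data must be compatible with a Schwarz reflection across $v=0$. Combining the above ODE with $|U|^2 = 1$ and $U\cdot n = 0$ on $v=0$ and the Liouville equation $\Delta\ln\lambda = 2/\lambda$ on $\Omega$ (obtained from the Gauss equation applied to $\phi = -\tfrac{1}{2}\ln\lambda$), one shows that $g|_{v=0}$ parameterizes a circle on the Riemann sphere, equivalently $|g_w|/(1+|g|^2) = 1/(2\sqrt{\lambda})$ is constant along the boundary. This yields $\lambda$ constant on each boundary component, and the Frenet analysis of the first step then presents each boundary component as the planar intersection $S^2 \cap \{\vec v \cdot x = c\}$ for suitable $\vec v\in\mathbb{R}^3$ and $c\in\mathbb{R}$. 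The finitely many umbilic points on the boundary (Lemma 3) cause no obstruction, as the circular structure extends across them by continuity.

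The main obstacle is extracting the Schwarz reflection principle for $g$ in the third step: converting the combined constraints $|U|^2=1$, $U\cdot n = 0$ and the ODE for $\log g$ on the boundary into a clean statement that the boundary values of $g$ lie on a circle in $\mathbb{C}\cup\{\infty\}$. The specific normalization $f = e^{i\theta_0}/(2g_w)$ from Theorem 5 is essential here, since it ties $\lambda = (1+|g|^2)^2/(4|g_w|^2)$ directly to the holomorphic Gauss map whose boundary behavior is being constrained.
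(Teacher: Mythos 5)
Your reduction is sound: with the Darboux frame along the boundary one gets $\kappa_g\equiv 1$ and $\tau_g\equiv 0$, the space-curve torsion equals $\kappa_n'/(1+\kappa_n^2)$, and so a boundary component is a planar circle exactly when the principal curvature $1/\lambda$ is constant along it; the identities $\lambda_v=-2\lambda^{3/2}$ on the boundary and $2U_w=U_u+i\sqrt{\lambda}\,U$ also check out. The problem is that the entire content of the theorem now sits in your third step, and that step is not proved --- you yourself flag it as ``the main obstacle.'' Worse, the step as stated rests on a false equivalence: what you need is that $\lambda=(1+|g|^2)^2/(4|g_w|^2)$ is constant on the boundary, i.e.\ that the \emph{spherical derivative} $2|g_w|/(1+|g|^2)$ of the Gauss map is constant there. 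That is a statement about the parameterization speed of the boundary Gauss-map curve, not about its image, and it is neither implied by nor equivalent to ``$g$ restricted to the boundary lies on a circle in $\mathbb{C}\cup\{\infty\}$'': a circle can be traced at non-constant spherical speed, and a non-circular curve can have constant spherical speed. So even if the Schwarz-reflection argument were carried out and the boundary values of $g$ shown to lie on a circle, the conclusion that $\lambda$ is constant would not follow, and no concrete mechanism is offered for combining the ODE for $\log g$, the relation $\lambda_v=-2\lambda^{3/2}$, and the Liouville equation into either statement.

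For comparison, the paper avoids any global claim along the boundary: at a fixed non-umbilic boundary point it expands $g=R+a_1w+a_2w^2+\cdots$, writes out the Weierstrass integrals to third order, and compares Taylor coefficients (of $v$ and $v^2$, respectively $v^2$ and $v^3$) in the two orthogonality identities $YX_u=XY_u$, $YZ_u=ZY_u$ and in the sphere condition $X^2+Y^2+Z^2=1$. These force $a_1=\bar a_1$, $a_2=\bar a_2$ and $\Psi=\bar\Psi$, whence $X_v=Z_v=X_{vvv}=Z_{vvv}=0$ at the point, so $\det(U_v,U_{vv},U_{vvv})=0$ and the torsion vanishes there; density of non-umbilic boundary points finishes the argument. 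To salvage your route you would need to convert your boundary identities into an actual proof that the spherical derivative of $g$ is constant along the boundary, which amounts to essentially the same coefficient bookkeeping the paper performs, just organized around $\lambda$ instead of around the torsion determinant.
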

  \begin{proof} Firstly, if the set of umbilic points on the boundary curves is not discrete, then there exist cluster points, and the free boundary minimal surface will be flat, it is the equator disk. So the claim in the theorem holds.

   Next, consider the set of umbilic points on the boundary is discrete. Take any non umbilic point $P$ on the boundary $\partial \Omega$. In the following we compute the torsion at $P$ of the boundary curves.

   By the results of the up sections, there exists a principle patch around $P$,
   $$U:\Omega \rightarrow \mathbb{R}^3,$$
   where $\Omega=\{w=u+iv \ |\ -\delta<u<\delta,\ \ -\delta<v<\delta\},$ $w(P)=0$, and we have the fundamental forms
   $$I=\lambda(du^2+dv^2)=\lambda dwd{\bar w},$$
   $$ II=-du^2+dv^2=-\frac{1}{2}(dw^2+d{\bar w}^2).$$

   The part of the boundary curves through $P$, contained in $\Omega$, is a principle line, then it is a $u-$line or $v-$line. Without loss of generality, assume it is a $v-$line, determined by $u=0$.

  For simplicity, if needed, by taking a rotation of the ball $\mathbb{B}^3$, we can put $P$ at a special position such that the tangent vector at point $P$ of boundary curve is parallel to $Y$ axis, and the unit normal vector of minimal surface at $P$ is located in $XZ$ plane, properly,
 $$
  \frac{dU(0,v)}{dv}|_{v=0} \parallel (0,1,0),\ \ g(0)=R>1.
 $$
 Here $g(w)$ is holomorphic around the point $w=0$, and we have required the minimal surface can be extended across the boundary defined on a region $\Omega$.

 Now, consider the Weierstrass representation of the minimal surface on the domain $\Omega$. From the theorem, we know the Weierstrass data

$$(g,\omega)=(g(w),f(w)dw),\ \ \ \omega=f(w)dw=\frac{1}{2}\frac{e^{i\theta_0}}{g_w}dw.$$
Since $g(w)$ is holomorphic on the domain $\Omega$ and $g(0)=R$, then $g$ and $g_w$ can be expressed in series of
$$
  \begin{array}{ll}
    g=R+a_1w+a_2w^2+a_3w^3+\cdots , \\
    g_w=a_1+2a_2w+3a_3w^2+\cdots .
  \end{array}
$$
As $\omega$ is holomorphic on the domain $\Omega$ around $w=0$ hence $a_1\neq 0$. Consequently we get
\begin{align*}
\frac{1}{g_w} =&\frac{1}{a_1}-\frac{2a_2}{a_1^2}w+(\frac{4a_2^2}{a_1^3}-\frac{3a_3}{a_1^2})w^2+\\
&(-\frac{8a_2^3}{a_1^4}+\frac{12a_2a_3}{a_1^3}-\frac{4a_4}{a_1^2})w^3+\cdots,\\
\frac{g^2}{g_w} =&\frac{R^2}{a_1}+2R(1-\frac{a_2}{a_1^2}R)w+[(\frac{4a_2^2}{a_1^3}-\frac{3a_3}{a_1^2})R^2-\frac{2a_2}{a_1}R+a_1]w^2+\\
&[(-\frac{8a_2^3}{a_1^4}+\frac{12a_2a_3}{a_1^3}-\frac{4a_4}{a_1^2})R^2+4(\frac{a_2^2}{a_1^2}-\frac{a_3}{a_1})R]w^3+\cdots,\\
\frac{g}{g_w} =&\frac{R}{a_1}+(1-\frac{2a_2}{a_1^2}R)w+[(\frac{4a_2^2}{a_1^3}-\frac{3a_3}{a_1^2})R-\frac{a_2}{a_1}]w^2+\\
&[(-\frac{8a_2^3}{a_1^4}+\frac{12a_2a_3}{a_1^3}-\frac{4a_4}{a_1^2})R+2(\frac{a_2^2}{a_1^2}-\frac{a_3}{a_1})]w^3+\cdots.\\
\end{align*}
Now let $(X,Y,Z)$ denote the coordinates of points in $R^3$, and take the Weierstrass representation of the minimal surface on the domain $\Omega$, $U:\Omega\rightarrow \mathbb{R}^3$
\begin{align*}
 X(w)&=\text {Re}\int^w_{w_0}(1-g(w)^2)\frac{1}{2}\frac{e^{i\theta_0}}{g_w}dw+X_0,\\
Y(w)&=\text {Re}\int^w_{w_0}i(1+g(w)^2)\frac{1}{2}\frac{e^{i\theta_0}}{g_w}dw+Y_0,\\
Z(w)&=\text {Re}\int^w_{w_0}2g(w)\frac{1}{2}\frac{e^{i\theta_0}}{g_w}dw+Z_0.
\end{align*}
So we get
\begin{align*}
 X(w)=&X_0+\frac{1}{2}\text {Re} \ e^{i\theta}\{\frac{1-R^2}{a_1}w-[(1-\frac{a_2}{a_1^2}R)R+\frac{a_2}{a_1^2}]w^2\\
      &+\frac{1}{3}[(\frac{4a_2^2}{a_1^3}-\frac{3a_3}{a_1^2})(1-R^2)+\frac{2a_2}{a_1}R-a_1]w^3+\cdots\},\\
 Y(w)= &Y_0+
\frac{1}{2}\text {Re} \ ie^{i\theta}\{\frac{1+R^2}{a_1}w+[(1-\frac{a_2}{a_1^2}R)R-\frac{a_2}{a_1^2}]w^2\\
       &+\frac{1}{3}[(\frac{4a_2^2}{a_1^3}-\frac{3a_3}{a_1^2})(1+R^2)-\frac{2a_2}{a_1}R+a_1]w^3+\cdots\},\\
 Z(w)= &Z_0+\frac{1}{2}\text {Re} \ 2e^{i\theta}\{\frac{R}{a_1}w+\frac{1}{2}(1-\frac{2a_2}{a_1^2}R)w^2\\
       &+\frac{1}{3}[(\frac{4a_2^2}{a_1^3}-\frac{3a_3}{a_1^2})R-\frac{a_2}{a_1}]w^3+\cdots\}.\\
\end{align*}
On the boundary curve, $w=iv$, appealing the above equations we have the vectors at the point $P(0)$ as
$$\left(
    \begin{array}{c}
      X_v \\
      Y_v \\
      Z_v \\
    \end{array}
  \right)(0)=\frac{1}{4}\left(
                            \begin{array}{c}
                              (1-R^2)(\frac{e^{i\theta_0}}{a_1}-\frac{e^{-i\theta_0}}{\bar {a}_1})i \\
                             -(1+R^2)(\frac{e^{i\theta_0}}{a_1}+\frac{e^{-i\theta_0}}{\bar {a}_1})\\
                              2R(\frac{e^{i\theta_0}}{a_1}-\frac{e^{-i\theta_0}}{\bar {a}_1})i\\
                            \end{array}
                          \right)\parallel \left(
                                             \begin{array}{c}
                                               0 \\
                                               1 \\
                                               0 \\
                                             \end{array}
                                           \right),$$
hence $\frac{e^{i\theta_0}}{a_1}$ is a real number, in other words $a_1=\pm|a_1|e^{i\theta_0},$ and (from boundary assumption at $P(0)$)
$$\left(
    \begin{array}{c}
      X_u \\
      Y_u \\
      Z_u \\
    \end{array}
  \right)(0)=\frac{1}{2}\left(
                            \begin{array}{c}
                              (1-R^2)\frac{e^{i\theta_0}}{a_1} \\
                               0  \\
                              2R\frac{e^{i\theta_0}}{a_1}\\
                            \end{array}
                          \right) \parallel \left(
                                             \begin{array}{c}
                                               X_0 \\
                                               Y_0 \\
                                               Z_0 \\
                                             \end{array}
                                           \right),$$
so the position vector of $P(0)$
$$\left(
                                             \begin{array}{c}
                                               X_0 \\
                                               Y_0 \\
                                               Z_0 \\
                                             \end{array}
                                           \right)=\frac{1}{1+R^2}\left(
                            \begin{array}{c}
                              1-R^2 \\
                               0  \\
                              2R \\
                            \end{array}
                          \right).$$

In the following, for our convenience denoting $\Psi=\frac{4a_2^2}{a_1^2}-\frac{3a_3}{a_1},$ the coordinate functions of the boundary curve near $P$ are
\begin{align*}
X= &X_0+\frac{1}{4}\frac{e^{i\theta_0}}{a_1}\{[(\frac{a_2}{a_1}+\frac{\bar a_2}{\bar a_1})(1-R^2)+R(a_1+\bar a_1)]v^2\\
   &+\frac{i}{3}[(\bar \Psi-\Psi)(1-R^2)+2R(\bar {a}_2-a_2)+a_1^2-\bar {a}_1^2]v^3+\cdots \},\\
Y= &\frac{1}{4}\frac{e^{i\theta_0}}{a_1}\{-2(1+R^2)v+i[(\frac{a_2}{a_1}-\frac{\bar a_2}{\bar a_1})(1+R^2)+R(\bar a_1- a_1)]v^2\\
   &+\frac{1}{3}[(\bar \Psi+\Psi)(1+R^2)-2R(\bar {a}_2+a_2)+a_1^2+\bar {a}_1^2]v^3+\cdots \},\\
Z= &Z_0+\frac{1}{2}\frac{e^{i\theta_0}}{a_1}\{[(a_1+\bar a_1)-2R(\frac{a_2}{a_1}+\frac{\bar a_2}{\bar a_1})]v^2\\
   &+\frac{i}{3}[(\bar \Psi-\Psi)R+a_2-\bar {a}_2]v^3+\cdots \}.\\
\end{align*}

 The normal vector field of the boundary near the point $P$ can be written as
\begin{align*}
X_u= &\frac{1}{4}\frac{e^{i\theta_0}}{a_1}\{2(1-R^2)+2i[(\frac{\bar a_2}{\bar a_1}-\frac{a_2}{a_1})(1-R^2)+R(\bar a_1-a_1)]v\\
     &-[(\bar \Psi+\Psi)(1-R^2)+2R(\bar {a}_2+a_2)-(a_1^2+\bar {a}_1^2)]v^2+\cdots \},\\
Y_u= &\frac{1}{4}\frac{e^{i\theta_0}}{a_1}\{2[(\frac{\bar a_2}{\bar a_1}+\frac{a_2}{a_1})(1+R^2)-R(\bar a_1+a_1)]v\\
     &+i[(\bar \Psi-\Psi)(1+R^2)+2R(a_2-\bar {a}_2)-a_1^2+\bar {a}_1^2]v^2+\cdots \},\\
Z_u= &\frac{1}{2}\frac{e^{i\theta_0}}{a_1}\{2R+i[2(\frac{\bar a_2}{\bar a_1}-\frac{a_2}{a_1})R-\bar a_1+a_1]v\\
      &+[(\bar \Psi+\Psi)R-(\bar {a}_2+a_2)]v^2+\cdots \}.\\
\end{align*}

Next we recall the boundary orthogonal assumption, which is equivalent to $(X,Y,Z)\parallel(X_u,Y_u,Z_u)$, or
\begin{equation}
\frac{X_u}{X} =\frac{Y_u}{Y} =\frac{Z_u}{Z}.
\end{equation}

From the first equality of (6), i.e. $YX_u =XY_u$, comparing the coordinates of $v$ and $v^2$ on sides of the equation we get
\begin{equation}
\frac{e^{i\theta_0}}{a_1}= -\frac{2}{(1+R^2)^2}\{(\frac{\bar a_2}{\bar a_1}+\frac{a_2}{a_1})(1+R^2)-R(\bar a_1+a_1)\},
\end{equation}
\begin{equation}
{\bar \Psi}-\Psi=\Theta_1-\Theta_2
 \end{equation}
$$\Theta_1=\frac{1}{2}\frac{e^{i\theta_0}}{a_1}\{3(\frac{a_2}{a_1}-\frac{\bar a_2}{\bar a_1})(1+R^2)-\frac{R(1+3R^2)}{1-R^2}(\bar a_1-a_1)\},$$
$$ \Theta_2=\frac{1}{1+R^2}\{2R(a_2-\bar {a}_2)-a_1^2+\bar {a}_1^2\}.$$

In the same way, comparing the coordinates of $v$ and $v^2$ on sides of the second equality of (6), i.e. $YZ_u =ZY_u$, we obtain the equation (7) again and
\begin{equation}
{\bar \Psi}-\Psi=\Pi_1-\Pi_2,
\end{equation}
$$\Pi_1=\frac{1}{2}\frac{e^{i\theta_0}}{a_1}\{3(\frac{a_2}{a_1}-\frac{\bar a_2}{\bar a_1})(R^2+1)+(\frac{1}{R}+2R)(\bar a_1-a_1)\},$$
 $$  \Pi_2=\frac{1}{1+R^2}\{2R(a_2-\bar {a}_2)-a_1^2+\bar {a}_1^2\}.$$

On the other hand the boundary curve is on the sphere, so $X^2+Y^2+Z^2=1$. Comparing the coordinates of $v^2$ and $v^3$ on two sides of the equation we also get
\begin{equation}
\frac{e^{i\theta_0}}{a_1}= -\frac{2}{(1+R^2)^3}[(\frac{\bar a_2}{\bar a_1}+\frac{a_2}{a_1})(1+R^4)-R^3(\bar a_1+a_1)],
\end{equation}

\begin{equation}
{\bar \Psi}-\Psi=\Upsilon_1-\Upsilon_2,
\end{equation}
$$\Upsilon_1=\frac{3}{2}\frac{e^{i\theta_0}}{a_1}\{(\frac{a_2}{a_1}-\frac{\bar a_2}{\bar a_1})\frac{(1+R^2)^3}{1+R^4}+\frac{R(1+R^2)^2}{1+R^4}(\bar a_1-a_1)\},$$
  $$\Upsilon_2=\frac{2R^3}{1+R^4}(a_2-\bar {a}_2)+\frac{1-R^2}{1+R^4}(a_1^2-\bar {a}_1^2).$$

To combine the equation (7) with (10) we have
\begin{equation}
\frac{a_2}{a_1}+\frac{\bar a_2}{\bar a_1}=\frac{1}{2R}(a_1+\bar a_1),\ \ \ \frac{e^{i\theta_0}}{a_1}=\frac{R^2-1}{R(1+R^2)^2}(a_1+\bar a_1).
\end{equation}
From (8), (9) and (11) we get that $(1+R^2)^2((a_1^2-\bar a_1^2)=0,$ as  $\frac{e^{i\theta_0}}{a_1}\neq 0$ , then
$$a_1=\bar a_1,\ \  e^{i\theta_0}=2a_1^2\frac{R^2-1}{R(1+R^2)^2}>0,\ \ e^{i\theta_0}=1.$$

Applying it to equations (8,9,11), and with a simple computation we have that
$a_2=\bar a_2,$ and  $ \Psi=\bar \Psi.$  We take the derivatives of coordinates functions with respect to $v$, at point $P$,
$$ X_{vvv}=i\frac{1}{2}\frac{e^{i\theta_0}}{a_1}[(\bar \Psi-\Psi)(1-R^2)+2R(\bar a_2-a_2)+a_1^2-\bar a_1^2]=0,$$
$$Z_{vvv}=i\frac{e^{i\theta_0}}{a_1}[(\bar \Psi-\Psi)R+ a_2-\bar a_2]=0.$$
Hence at point $P$,
$$\det \left(
    \begin{array}{ccc}
      X_v &  X_{vv} &  X_{vvv}\\
      Y_v &  Y_{vv} &  Y_{vvv}\\
     Z_v &  Z_{vv} &  Z_{vvv } \\
    \end{array}
  \right)=\det \left(
    \begin{array}{ccc}
      0 &  X_{vv} &  0\\
      Y_v &  Y_{vv} &  Y_{vvv}\\
     0 &  Z_{vv} &  0 \\
    \end{array}
  \right)=0.$$
Thus the torsion of boundary curve at point $P$ vanishes, i.e.
$$\tau(0)=\frac{\det(U_v,U_{vv},U_{vvv})}{|U_v \times U_{vv}|^2}=0.$$
Since the set of such point $P$ is dense on the boundary curves, thus the boundary curves have no torsion, and which are planar circles.
\end{proof}

\section{The Application of the Theorem}

\subsection{Generalization of Nitsche's Theorem}
\begin{thm}
If the free boundary minimal surface has one boundary component, then it is the equator disk.
\end{thm}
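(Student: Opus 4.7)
The plan is to exploit that a minimal immersion has harmonic Cartesian coordinates, together with Theorem 6, which tells us the unique boundary component is a planar circle. With only one boundary circle, a single application of the maximum principle to a linear height function forces the whole immersion into the plane of that circle, after which the free boundary condition pins down the plane and identifies $\Sigma$ with the equator disc.

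Concretely, I would first apply Theorem 6 to the unique boundary curve to realize it as the intersection of $\partial \mathbb{B}^3$ with some affine plane $\Pi = \{ x \in \mathbb{R}^3 : \mathbf{n}\cdot x = d \}$, where $\mathbf{n}$ is a unit normal to $\Pi$. Next I would consider the height function $h := \mathbf{n}\cdot U$ on $\Sigma$. Since $U : \Sigma \to \mathbb{R}^3$ is a minimal immersion, each of its Euclidean coordinate functions is harmonic with respect to the induced metric, and hence so is $h$. Because $h \equiv d$ on the single boundary component $\partial \Sigma$, the strong maximum principle on the compact surface $\Sigma$ gives $h \equiv d$ throughout. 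Hence $U(\Sigma) \subset \Pi$, and $\Sigma$ immerses as a flat subset of $\Pi$.

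To identify which plane $\Pi$ must be, I would invoke the free boundary orthogonality at an arbitrary point $p \in \partial \Sigma$. Orthogonal contact with $\partial \mathbb{B}^3$ means that the position vector $p$ (which is the outward unit normal of $\mathbb{B}^3$ at $p$) must lie in $T_p U(\Sigma)$; but this tangent plane is precisely the vector subspace $\mathbf{n}^{\perp}$, so $p \cdot \mathbf{n} = 0$, forcing $d = 0$. Thus $\Pi$ passes through the origin, the boundary is a unit great circle of $\partial \mathbb{B}^3$, and $U(\Sigma)$ is the flat disc it bounds --- the equator disc.

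I do not anticipate a serious obstacle; the argument is essentially a one-line maximum principle sandwiched between Theorem 6 and the free boundary condition. The only step meriting a brief verification is that, once $U$ factors through the plane $\Pi$, the image of the compact connected surface $\Sigma$ with its single boundary component mapped onto the great circle actually fills out the entire disc bounded by that circle (rather than a proper subset); this is immediate from a standard degree or covering argument, since the bounded disc is simply connected.
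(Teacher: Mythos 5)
Your proof is correct, but it takes a genuinely different route from the paper's. The paper argues as follows: by Theorem 6 the single boundary component is a circle; the balancing identity $\int_{\partial\Sigma} U\,ds=0$ (first variation plus the free boundary condition, which identifies the outward conormal with the position vector $U$) forces the center of that one circle to be the origin, so it is a great circle; along a great circle the curvature vector $-U$ is tangent to $\Sigma$, so the normal curvature of the boundary vanishes, and by minimality (and Lemma 2, the boundary being a principal line) every boundary point is umbilic; the umbilic set is then non-discrete, and the holomorphicity argument of Section 3 forces the surface to be flat, hence the equator disk. You instead establish flatness first: the coordinate functions of a minimal immersion are harmonic, so the height $h=\mathbf{n}\cdot U$ over the plane of the boundary circle is harmonic and constant on the single boundary component, whence constant on all of $\Sigma$ by the maximum principle, and only afterwards do you use orthogonality to force the plane through the origin. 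Both arguments use Theorem 6 and the one-component hypothesis essentially (you need it so that $h$ is a single constant on all of $\partial\Sigma$; the paper needs it so that balancing pins down the center of one circle rather than a weighted sum of centers), but yours is more elementary and self-contained --- it is just the convex hull property --- and bypasses both the balancing formula and the umbilic-point machinery. Your closing remark about the image filling the whole disc is the right thing to check (interior points map into the open disc since $|U|^2$ is subharmonic, so the restriction to the interior is a proper local diffeomorphism onto the open disc, hence a diffeomorphism); the paper does not address this point at all.
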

\begin{proof}
By the theorem 6 we know the boundary is a circle, and which must be balanced, so it is the equator. Then the geodesic curvature and the curvature equal to one, so the principle curvature vanishes along the equator. Therefore, all the points on the boundary are umbilic, then the minimal surface is the equator disk.
\end{proof}

In fact from the proof above we have

\begin{cor}
If the free boundary minimal surface has a component of the boundary as an equator, then the minimal surface must be the equator disk.
\end{cor}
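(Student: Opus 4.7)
The plan is to reuse the final step of the proof of Theorem 7: show that the given equator boundary component consists entirely of umbilic points, and then invoke Lemma 3. Let $\gamma \subset \partial \Sigma$ be the boundary component that is an equator, parametrized by arc length as $U(s)$. Along $\gamma$ the surface unit normal $n(s) = U(s) \times U'(s)$ from the proof of Lemma 2 satisfies $n \perp U$ (free boundary orthogonality) and $U' \perp U$ (since $|U|=1$), so $\{U', n, U\}$ is an orthonormal frame with $n \times U' = -U$. Decomposing $U''(s)$ in the Darboux frame as $U''(s) = \kappa_g\,(n \times U') + \kappa_n\, n$ and using $\langle U, U''\rangle = -|U'|^2 = -1$ (from twice-differentiating $|U|^2 = 1$) yields $\kappa_g = 1$ along every free boundary component.

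Next I would specialize to the equator $\gamma$. Since $\gamma$ is a great circle in $\mathbb{S}^2$, it is a unit-radius circle in $\mathbb{R}^3$, so as a space curve its curvature is $|U''(s)| = 1$. Combined with $\kappa_g = 1$ and the orthogonal decomposition $|U''|^2 = \kappa_g^2 + \kappa_n^2$, this forces $\kappa_n \equiv 0$ on $\gamma$. By Lemma 2 the tangent $U'$ is a principal direction on $\Sigma$, so $\kappa_n$ equals one of the principal curvatures of $\Sigma$ along $\gamma$. Hence one principal curvature vanishes identically on $\gamma$, and by minimality the other principal curvature (its negative) vanishes as well. Therefore every point of $\gamma$ is an umbilic point of $\Sigma$.

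Finally, $\gamma$ is a one-dimensional arc of umbilic points on $\partial \Sigma$, and in particular the umbilic set is not discrete on the boundary. By Lemma 3 this can occur only for the equator disk, so $\Sigma$ must be the equator disk, which proves the corollary.

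The only delicate step is the first paragraph, where one must fix the sign in $n \times U' = \pm U$ and apply the Darboux formula correctly to extract $\kappa_g = 1$; once that identity is in place, the rest is a direct invocation of Lemmas 2 and 3, exactly mirroring the endgame already used in the proof of Theorem 7 but now triggered by the local equator hypothesis rather than by the global balancing argument for a single boundary component.
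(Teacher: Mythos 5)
Your proposal is correct and follows essentially the same route as the paper, which derives the corollary directly from the final step of the proof of Theorem 7: the equator hypothesis gives geodesic curvature and space-curve curvature both equal to one, hence vanishing normal (principal) curvature, hence a non-discrete set of umbilic points, forcing the equator disk. Your write-up merely supplies the details (the Darboux-frame computation of $\kappa_g=1$ and the appeal to Lemmas 2 and 3) that the paper leaves implicit.
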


\subsection{Principle lines orthogonal to the boundary circles}
\begin{thm}
The boundary point is the geodesic point of the principle lines orthogonal to the boundary circles.
\end{thm}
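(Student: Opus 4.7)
The plan is to reduce the claim to the vanishing of a single partial derivative of the conformal factor $\lambda$ in the principal patch, and then to read off that vanishing from the Taylor expansion of the Gauss map that was pinned down in the proof of Theorem 6.

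At a non-umbilic boundary point $P$, Lemmas 2--4 together with Theorem 5 give a principal patch $\Omega$ with isothermal coordinates $(u,v)$, metric $I=\lambda(du^2+dv^2)$, $w(P)=0$, with the boundary circle running along the $v$-axis and the orthogonal principal line running along the $u$-axis. A direct computation using the Christoffel symbols of an isothermal metric (equivalently, Liouville's formula) yields
$$\kappa_g(P)\;=\;-\frac{\lambda_v(0,0)}{2\,\lambda(0,0)^{3/2}}$$
for the geodesic curvature of the $u$-line at $P$. Hence the theorem is equivalent to $\lambda_v(0,0)=0$.

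From Theorem 5 we have $\lambda=(1+|g|^2)^2/(4|g_w|^2)$, and the Wirtinger identity $\partial_v=i(\partial_w-\partial_{\bar w})$ gives
$$\partial_v|g|^2\;=\;-2\,\text{Im}(g_w\,\bar g),\qquad \partial_v|g_w|^2\;=\;-2\,\text{Im}(g_{ww}\,\overline{g_w}).$$
Evaluating at $w=0$ I invoke the normalizations established in the proof of Theorem 6: after rotating $\mathbb{B}^3$ so that the tangent to the boundary at $P$ is parallel to the $Y$-axis, one has $g(0)=R\in\mathbb{R}$, and the third-order matching of the free boundary and spherical boundary conditions forces $e^{i\theta_0}=1$ together with $a_1=g_w(0)$ and $2a_2=g_{ww}(0)$ both real. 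Substituting these at $w=0$ gives $g_w\bar g\,|_{0}=a_1R$ and $g_{ww}\overline{g_w}\,|_{0}=2a_1a_2$, both real, so both imaginary parts above vanish and $\partial_v\lambda(0,0)=0$.

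Since by Lemma 3 the umbilic points on the boundary circles are isolated, this yields $\kappa_g=0$ at every non-umbilic boundary point, which is the assertion of the theorem. The main delicate step is not any single computation but recognizing that the third-order expansions already carried out in the proof of Theorem 6 hand us exactly the reality conditions on $a_1$, $a_2$ and $e^{i\theta_0}$ that the geodesic-curvature calculation requires; once this observation is made, the remainder is routine bookkeeping.
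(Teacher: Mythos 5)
Your proposal is correct, and it supplies in full the ``straightforward computation'' that the paper's proof of this theorem merely asserts: reducing the geodesic curvature of the orthogonal principal line to $\lambda_v(0,0)$ via Liouville's formula in the principal isothermal patch, and then killing that derivative using the reality of $g(0)=R$, $a_1$ and $a_2$ already forced in the proof of Theorem 6 is exactly the kind of direct verification the author has in mind, and every step (the Wirtinger identities, the quotient-rule conclusion $\partial_v\lambda(0,0)=0$, and the restriction to the dense set of non-umbilic boundary points) checks out. Since the paper offers no details to diverge from, there is nothing further to compare.
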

\begin{proof}It can be proved by a straightforward computation.
\end{proof}

\section*{Acknowledgment}
 This work was partially supported by NSFC (Natural Science Foundation of China) No. 11771070.
% -----------------------------------------------------------------------------
%\small

\end{document}